\DeclareMathOperator*{\argmin}{arg\,min}
\theoremstyle{plain}
\newtheorem{theorem}{Theorem}[section]
\newtheorem{proposition}[theorem]{Proposition}
\theoremstyle{definition}
\theoremstyle{remark}
\title{Coherent Local Explanations for \\ Mathematical Optimization}
\author{%
  Daan ~Otto \\
  Amsterdam Business School\\
  University of Amsterdam\\
  Amsterdam, The Netherlands\\
  \texttt{d.otto@uva.nl}
  \And
  Jannis ~Kurtz \\
  Amsterdam Business School\\
  University of Amsterdam\\
  Amsterdam, The Netherlands
  \And
  Ilker ~Birbil \\
  Amsterdam Business School\\
  University of Amsterdam\\
  Amsterdam, The Netherlands
}
\begin{document}

\maketitle

\begin{abstract}
  The surge of explainable artificial intelligence methods seeks to enhance transparency and explainability in machine learning models. At the same time, there is a growing demand for explaining decisions taken through complex algorithms used in mathematical optimization. However, current explanation methods do not take into account the structure of the underlying optimization problem, leading to unreliable outcomes. In response to this need, we introduce Coherent Local Explanations for Mathematical Optimization (CLEMO). CLEMO provides explanations for multiple components of optimization models, the objective value and decision variables, which are coherent with the underlying model structure. Our sampling-based procedure can provide explanations for the behavior of exact and heuristic solution algorithms. The effectiveness of CLEMO is illustrated by experiments for the shortest path problem, the knapsack problem, and the vehicle routing problem.
\end{abstract}

\section{Introduction}\label{sec:Introduction}
The field of mathematical optimization plays a crucial role in various domains such as transportation, healthcare, communication, and disaster management  \cite{petropoulos2024operational}. Since 1940s, significant advancements have been made in this field, leading to the development of complex and effective algorithms like the simplex method and the gradient descent algorithm \cite{nocwright09}. 
More recently, the integration of artificial intelligence (AI) and machine learning (ML) techniques has further enhanced optimization methods \cite{bengio2021machine,scavuzzo2024machine}.


When using mathematical optimization in practical applications, decision makers must come to a consensus on the \textit{main components} of the optimization model such as decision variables, objective function, and constraints. Afterwards, they need to employ an exact or heuristic algorithm to solve the resulting problem. For setting up the model, the decision maker has to accurately estimate all necessary parameters for the model and algorithm, \textit{e.g.}, future customer demands or warehouse capacities. However, the solution algorithm can be highly sensitive to even small deviations in these parameters and inaccurate parameter estimations can result in sub-optimal decisions being made. 

The analysis of the behavior of an optimization model regarding (small) changes in its problem parameters is widely known as \textit{sensitivity analysis} \citep{borgonovo_sensitivity_2016} or \textit{parametric optimization} \citep{still2018lectures}. In both areas, many methods were developed to analyze the model behavior locally and globally, \textit{e.g.}, by one-at-a-time methods, differentiation-based methods or variance-based methods \citep{borgonovo_sensitivity_2016,iooss2015review,razavi_future_2021}. One of the promising directions mentioned in \citep{razavi_future_2021} is the use of ML models to develop sensitivity analysis methods. The main idea of such an approach is to fit an explainable ML model which locally approximates the behavior of the component of the optimization problem to be analyzed (like the optimal objective function value); see \textit{e.g.}, \citep{wagner_global_1995}. Usually, linear regression models are fitted because the standardized regression coefficient becomes a natural sensitivity measure. This approach is similar to the LIME method, which is widely used to explain trained ML models \citep{ribeiro_why_2016}.

Although ML-based sensitivity analysis is effective and model-agnostic, it falls short in providing clear explanations to users when analyzing various components of a model at the same time. Decision makers often need to analyze the main components of an optimization model, such as the objective function value and the values of the decision variables, which are closely intertwined due to the problem's structure. However, fitting separate linear models to predict the outcome of each component disregards this correlation and leads to incoherent explanations. This can result in situations where either (i) the predicted optimal value does not align with the objective value of the predicted solution, or (ii) the predicted solution violates the constraints of the problem. Inconsistent predictions that do not align with the model's structure do not enhance understanding of the optimization model; instead, they can cause confusion for the decision makers. To illustrate this, consider the following simple optimization model
\[
\max\{x_1 + x_2 : 4x_1 + 4.1x_2 \le 10, x_1 \geq 0, x_2 \geq 0\}.
\]
Suppose that 
the coefficient $a_{12}=4.1$ is the sensitive parameter to analyze. The decision maker is seeking to understand the impact that small changes in this parameter will have on the optimal decision values $x_1^*,x_2^*$. Fitting two separate linear models on a small number of samples for $a_{12}$ leads to the approximations $x_1^* \approx  0.11a_{12}$ and $x_2^* \approx  0.59a_{12}$. If we apply the latter predictions to our nominal parameter value of $a_{12}=4.1$, then the constraint value becomes 
\[
4\cdot 0.11a_{12} + 4.1\cdot 0.59a_{12} \approx 11.7 > 10,
\]
which has a constraint violation of more than $17\%$. While the fitted linear models are explainable approximations of our problem components, they are not coherent and hence do not provide reliable explanations to the user.   

\textbf{Contributions.} In this work, we present a new sampling-based approach called Coherent Local Explanations for Mathematical Optimization (CLEMO). This approach extends the concept of local explanations to multiple components of an optimization model that are \textit{coherent} with the structure of the model. To incorporate a measure of coherence, we design regularizers evaluating the coherence of the explanation models. We argue that CLEMO is method-agnostic, and hence, it can be used to explain arbitrary exact and heuristic algorithms for solving optimization problems. Lastly, we empirically validate CLEMO on a collection of well-known optimization problems including the shortest path problem, the knapsack problem, and the vehicle routing problem. Our evaluation focuses on accuracy, interpretability, coherence, and stability when subjected to resampling.

\textbf{Related literature.} Recently, there has been a significant amount of research focused on improving the explainability of ML models \citep{adadi_peeking_2018,bodria_benchmarking_2023,dwivedi_explainable_2023,linardatos_explainable_2021, minh_explainable_2022, das_opportunities_2020}. Common XAI methods include feature-based explanation methods, such as LIME \citep{ribeiro_why_2016} and SHAP \citep{lundberg_unified_2017}, and example-based explanations, such as counterfactual explanations, see \textit{e.g.}, the survey by \citet{guidotti_counterfactual_2024}. LIME was analyzed and extended in several works regarding its stability \cite{zhang_why_2019, zafar_deterministic_2021} or its use of advanced sampling techniques \cite{zhou_s-lime_2021, saito_improving_2021}. In \citep{dieber_why_2020}, interviews were conducted with individuals that never worked with LIME before. The research shows that LIME increases model interpretability although the user experience could be improved.

Recently, the notion of explainable and interpretable mathematical optimization attained increasing popularity. Example-based explanation methods such as counterfactuals were introduced to explain optimization models. \citet{korikov_counterfactual_2021} and \citet{korikov_objective-based_2023} examine counterfactual explanations for integer problems using inverse optimization. Generalizations of the concept have also been investigated theoretically and experimentally for linear optimization problems \citep{kurtz_counterfactual_2024}. Furthermore, counterfactuals for data-driven optimization were studied in \cite{forel_explainable_2023}.

A different approach is incorporating interpretability into the optimization process resulting in intrinsic explainable decision making contrary to the post-hoc explanation method. In \cite{aigner_framework_2024}, for example, the authors study optimization models with an explainability metric added to the objective resulting in a optimization model that makes a trade-off between optimality and explainability. Similarly in  \cite{goerigk_framework_2023}, the authors ensure an interpretable model by using decision trees that resemble the optimization process and hence explain the model by providing optimization rules based on the model parameters.

While -to the best of our knowledge- feature-based explanation methods are scarce for mathematical optimization, parametric optimization and sensitivity analysis are strongly related to these methods. In both fields, the effect of the problem parameters on the model's output is analyzed where the model's output can be the optimal value, optimal decision values or even the runtime of the algorithm; see \cite{still2018lectures,borgonovo_sensitivity_2016,iooss2015review,razavi_future_2021}. Three decades ago, \citet{wagner_global_1995} already presented a global sensitivity method in which he approximated the optimal objective value of linear programming problems like the knapsack problem with a linear regression model. For this, he used normal perturbations of the model parameters as an input, somewhat a global predecessor of LIME.

\section{Preliminaries}\label{sec:Preliminaries}
We write vectors in boldface font and use the shorthand notation $[n]_0:=\{0,\dots,n\}$ and $[n]_1:=\{1,\dots,n\}$ for the index sets.

\textbf{LIME.} Local Interpretable Model-agnostic Explanations (LIME) is an XAI method to produce an explanation for black-box ML models $\bar{h}: \mathcal Z \to \mathbb R$, which map any data point $\bm{z}$ in the data space $\mathcal Z$ to a real value. Given a data point $\bm{z}^0$, LIME approximates $\bar{h}$ locally around this point with a surrogate model $\bar{g}$ from a set of explainable models $\mathcal{G}$ (\textit{e.g.}, linear models). To this end, LIME samples a set of points $\bm{z}^1,\ldots , \bm{z}^N$ in proximity to $\bm{z}^0$ and calculates an optimizer of the problem
\begin{equation}
\label{eqn:LIME}
\argmin_{\bar{g}\in\mathcal{G}} \ \sum\nolimits_{i=0}^{N} w^i \ell\left( \bar{g}(\bm{z}^i), \bar{h}(\bm{z}^i)\right)+\Omega(\bar{g}),
\end{equation}
where $\ell$ is a loss function, $\Omega$ is a complexity measure and $w^i$ weighs data points according to their proximity to $\bm{z}^0$. LIME uses an indicator function as a complexity measure returning 0 when the number of non-zero features used by $\bar{g}$ is at most $K$, and $\infty$, otherwise. For the weights, LIME uses  $w^i=\exp(-d(\bm{z}^i,\bm{z}^0)^2/\nu^2)$ with distance function $d$ and hyperparameter $\nu$.

\textbf{Mathematical Optimization.} In mathematical optimization, the aim is to optimize an objective function over a set of feasible solutions. Formally, an optimization problem is given as
\begin{equation}
\label{eqn:sampleproblem}
\begin{array}{ll}
\min & f(\bm{x}; \bm{\theta})\\
 \text{s.t.} & \bm{x} \in \mathbb{X}(\bm{\theta}), 
\end{array}
\end{equation}
where $\bm{x}\in \mathbb{R}^p$ are the decision variables, $f$ is an objective function which is parameterized by parameter vector $\bm{\theta}\in\Theta$ and $\mathbb{X}(\bm{\theta})\subseteq\mathbb{R}^p$ is the feasible region, again parameterized by $\bm{\theta}$. We call $\bm{\theta}$ the optimization parameters. As an example, one popular class of problems belongs linear optimization, where the problem is defined as 
$\min\{\bm{c}^\intercal \bm{x} : \bm{A}\bm{x} = \bm{b}, \bm{x} \geq \bm{0}\}$. In this case, we have $\bm{\theta} = (\bm{c}, \bm{A}, \bm{b})$, $f(\bm{x}; \bm{\theta}) = \bm{c}^\intercal \bm{x}$, and $\mathbb{X}(\bm{\theta}) = \{\bm{x}\ge \bm{0} : \bm{A} \bm{x} \ge  \bm{b}\}$. The most popular methods to solve linear optimization problems are the simplex method or the interior point method \cite{bertsimas1997introduction}.

Many real-world applications from operations research involve integer decision variables. In this case the feasible region is given as $\mathbb{X}(\bm{\theta}) = \{\bm{x}\in \mathbb Z^p : \bm{A} \bm{x} \ge  \bm{b}\}$. Such so called linear integer optimization problems are widely used, for example for routing problems, scheduling problems and many others \cite{petropoulos2024operational}. The most effective exact solution methods are based on branch \& bound type algorithms \cite{wolsey2020integer}. However, due to the NP-hardness of this class of problems often large-sized integer problems cannot be solved to optimality in reasonable time. Hence, often problem-specific or general purpose heuristic algorithm are used to quickly calculate possibly non-optimal feasible solutions.


\section{Methodology}\label{sec:Methodology}
In this section, we present CLEMO, a novel method to provide coherent local explanations for multiple components of mathematical optimization problems \eqref{eqn:sampleproblem}. Consider a given instance of Problem, \eqref{eqn:sampleproblem} which is parametrized by $\bm{\theta}^0$, and we call it the \textit{present problem}.
Additionally, we have a solution algorithm $h$ that we want to explain. The algorithm calculates feasible solutions for every problem instance of Problem \eqref{eqn:sampleproblem}. Note that this algorithm does not necessarily have to return an optimal solution, since our method also works for heuristic or approximation algorithms. The two components we aim to explain in this work are (i) the optimal objective value, and (ii) the values of the decision variables. To this end, we fit $p+1$ explainable models combined in the vector-valued function $g: \Theta \to \mathbb{R}^{p+1}$ where $g(\bm{\theta}) = (g_f(\bm{\theta}), g_{x_1}(\bm{\theta}), \dots, g_{x_p}(\bm{\theta}))$. Here, $\Theta$ is the parameter space containing all possible parameter vectors $\bm{\theta}$ for \eqref{eqn:sampleproblem}. For example, the model $g_{x_i}$ ideally maps every parameter vector $\bm{\theta}$ to the corresponding solution value of the $i$-th decision variable $x_i$ returned by the solution algorithm $h$. For notational convenience, we denote $g(\bm{\theta}) = (g_f(\bm{\theta}), g_{\bm{x}}(\bm{\theta}))$.

The main goal of this work is to generate explanations that are coherent regarding the structure of the underlying optimization problem \eqref{eqn:sampleproblem}. More precisely, we say the model $g$ is \textit{coherent} for instance $\bm{\theta}$ if 


\begin{gather}
    f(g_{\bm{x}}(\bm{\theta}); \bm{\theta}) = g_{f}(\bm{\theta}), \label{eq:coherence1}\\
    g_{\bm{x}}(\bm{\theta}) \in \mathbb X(\bm{\theta}).\label{eq:coherence2}
\end{gather}
That is, the predictions are aligned with the underlying problem structure. Condition \eqref{eq:coherence1} ensures that the predictions for the decision variables $\bm{x}$, when applied to $f$, lead to the same objective value as the corresponding prediction for the objective value itself. Condition \eqref{eq:coherence2} ensures that the predictions of the decision variables are feasible for the corresponding problem. 

To find an explanation, we first generate a training data set $\mathcal{D}$ by sampling vectors $\bm{\theta}^i\in\Theta$, $i\in [N]_1$ which are close to $\bm{\theta}^0$. For each problem, we apply algorithm $h$ which returns a feasible solution $\bm{x}^i$ and the corresponding objective function value $f(\bm{x}^i,\bm{\theta}^i)$ for $i\in [N]_0$. We denote the returned components of the optimization model by $h(\bm{\theta}^i) := (f(\bm{x}^i;\bm{\theta}^i), \bm{x}^i)$ for $i \in [N]_0$. We want to find locally accurate models such that $g_{f}(\bm{\theta}^i) \approx f(\bm{x}^i; \bm{\theta}^i)$ and $g_{\bm x}(\bm{\theta}^i) \approx \bm{x}^i$ for all $i\in[N]_0$.

\textbf{Generating Explanations with LIME.} In principle, LIME as in \eqref{eqn:LIME} can be applied to any black box function, hence it can be used to explain our solution algorithm $h$. To this end, all explainable predictors in $g$ are fitted by solving the following problem 
\begin{equation}
\label{eqn:independentCLEMO}
\argmin_{g\in\mathcal{G}} ~ \sum\nolimits_{i=0}^N w^i \big(\ell_A(g(\bm{\theta}^i), h(\bm{\theta}^i)) + \Omega(g),
\end{equation}
where $\mathcal{G}$ contains all $p+1$-dimensional vectors of explainable functions, \textit{e.g.}, linear functions, the scalars $w^i \geq 0$ denote the sample weights, $\ell_A$ denotes the accuracy loss, and $\Omega$ is a complexity measure. If we use linear models for $g$, then the corresponding functions $g_{f}$ and $g_{\bm{x}}$ provide explainable predictors for components of the model; \textit{i.e.}, objective value and decision variables. However, this model does not account for the coherence of the calculated predictors with the underlying problem structure \eqref{eqn:sampleproblem}. As our experiments in Section \ref{sec:Experiments} show, indeed the corresponding predictors in $g$ are usually not coherent, \textit{i.e.}, they violate Conditions \ref{eq:coherence1} and \ref{eq:coherence2} significantly. We use the latter approach as a benchmark method.

\textbf{Coherent Explanations with CLEMO.} To generate coherent explanations we solve the problem
\begin{equation}
\label{eqn:CLEMO}
\argmin_{g\in\mathcal{G}} ~ \sum\nolimits_{i=0}^N w^i \big(\ell_A(g(\bm{\theta}^i), h(\bm{\theta}^i)) + R_C(g(\bm{\theta}^i))\big),
\end{equation}
where $\mathcal{G}$ contains all $p+1$-dimensional vectors of interpretable functions,  the scalars $w^i \geq 0$ denote the sample weights, $\ell_A$ denotes the accuracy loss, and $R_C$ corresponds to the coherence regularizer that punishes predictors which do not admit the coherence conditions \eqref{eq:coherence1} and~\eqref{eq:coherence2}. 

We note that theoretically, the coherence conditions could be added as constraints to the minimization problem \eqref{eqn:independentCLEMO}. However, there is no guarantee that a feasible solution $g$ exists, hence we enforce coherence via a regularizer. Similar to LIME, a complexity measure $\Omega$ could be added to the loss function if, for example, linear models with sparse weights are desired. For ease of notation, we omit this term. Note that $\ell_A$ and $R_C$ can contain hyperparameters to balance all components of the loss function. 

In principle, any appropriate function can be used for the accuracy and the coherence regularizer. We propose to use the squared loss
\begin{equation}
\label{eqn:lA}
\ell_A(g(\bm{\theta}^i), h(\bm{\theta}^i)) = \|g(\bm{\theta}^i) - h(\bm{\theta}^i)\|^2
\end{equation}
as accuracy loss, and for the coherence regularizer, we use
\begin{equation}
\label{eqn:RC}
\begin{array}{ll}
R_C(g(\bm{\theta}^i)) = & \lambda_{C_1}(g_{f}(\bm{\theta}^i) - f(g_{\bm x}(\bm{\theta}^i); \bm{\theta}^i))^2  + \lambda_{C_2}\delta\left( g_{\bm x}(\bm{\theta}^i), \mathbb X(\bm{\theta}^i) \right),
\end{array}
\end{equation}
where $\delta(\bm{x},\mathbb X(\bm{\theta}))$ denotes a distance measure between a point $\bm{x}$ and the feasible set $\mathbb X(\bm{\theta})$. The values $\lambda_{C_1}, \lambda_{C_2}$ are hyperparameters to balance the losses. The $R_C$-regularizer measures incoherence, the first term punishes the violation of the coherence condition \eqref{eq:coherence1}, while the second term punishes the violation of the coherence condition \eqref{eq:coherence2}. Note that a mathematical formulation of the optimization problem is needed to formulate $R_C$. However, independent of the solution algorithm $h$, any valid formulation can be used as long it contains all decisions $x_i$ which have to be explained. Given a formulation, a natural choice for the distance measure is the sum of constraint violations of a solution. For example, if the feasible region is given by a set of constraints $\mathbb X(\bm{\theta}^i)=\{ \bm{x}: \gamma_t(\bm{x},\bm{\theta}^i) \le 0, \ t=1,\ldots ,T\}$, then we define 
\begin{equation}\label{eq:delta_constraints_slacks}
\delta\left(\bm{x},\mathbb X(\bm{\theta}^i)\right) = \sum\nolimits_{t=1}^{T} \max\{ 0, \gamma_t(\bm{x},\bm{\theta}^i)\}.
\end{equation}
While problem \eqref{eqn:CLEMO} can be applied to different classes of hypothesis sets $\mathcal G$, we restrict $\mathcal{G}$ to linear models in this work. In this case we have coefficient vectors $\bm{\beta}_f, \bm{\beta}_{x_1}, \dots, \bm{\beta}_{x_p}$, such that
$g_f(\bm{\theta}^i; \bm{\beta}) := \bm{\beta}_f^\intercal \bm{\theta}^i$ and $g_{\bm x}(\bm{\theta}^i; \bm{\beta}):=(\bm{\beta}_{x_1}^\intercal\bm{\theta}^i, \dots, \bm{\beta}_{x_p}^\intercal\bm{\theta}^i)$.
For $\bm{\beta} \equiv (\bm{\beta}_f, \bm{\beta}_{x_1}, \dots, \bm{\beta}_{x_p})^\intercal$, Problem \eqref{eqn:CLEMO} then becomes
\begin{equation}
\label{eqn:CLEMObeta}
\argmin_{\bm{\beta}} ~ \sum\nolimits_{i=0}^N w^i \big(\ell_A(\bm{\beta}^\intercal\bm{\theta}^i, h(\bm{\theta}^i)) + R_C(\bm{\beta}^\intercal\bm{\theta}^i)\big).
\end{equation}
Note that CLEMO can easily be adjusted if only a subset of components has to be explained. In this case, we replace $g_{c}(\bm{\theta}^i;\bm{\beta})$ by the true value $h_c(\bm{\theta}^i)$ in the above model for all components $c\in \{f,x_1,\dots,x_p\}$ which do not have to be explained. This is especially useful if the optimization problem \eqref{eqn:sampleproblem} contains auxiliary variables (\textit{e.g.}, slack variables) that do not need to be explained.

Since we use the squared loss \eqref{eqn:lA} in Problem \eqref{eqn:CLEMObeta}, the first term corresponding to the accuracy loss becomes a convex function of $\bm{\beta}$. For the coherence regularizer, the following holds.
\begin{proposition}\label{prop:convex}
Suppose that $g(\bm{\theta})=\bm{\beta}^\intercal\bm{\theta}$ in \eqref{eqn:RC}. If the following conditions hold, then the coherence regularizer term in \eqref{eqn:CLEMObeta} is a convex function of $\bm{\beta}$:\\
    \textcolor{white}{.\quad}(1) The function $\bm{x} \mapsto f(\bm{x}; \bm{\theta})$ is affine. \qquad
    (2) The function $\bm{x} \mapsto \delta(\bm{x}, \mathbb{X}( \bm{\theta}))$ is convex.
\end{proposition}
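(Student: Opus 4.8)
The plan is to prove convexity of the full regularizer term in \eqref{eqn:CLEMObeta} by reducing it to a single sample. Since that term is the non-negative weighted sum $\sum_{i=0}^N w^i R_C(\bm{\beta}^\intercal\bm{\theta}^i)$ with $w^i\geq 0$, and since non-negative combinations of convex functions are convex, it suffices to show that for each fixed $i$ the summand $\bm{\beta}\mapsto R_C(\bm{\beta}^\intercal\bm{\theta}^i)$ is convex. I would then treat the two terms of $R_C$ in \eqref{eqn:RC} separately, relying throughout on the standard rule that the composition of a convex function with an affine map is convex.

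First I would record the underlying linearity that drives everything: under the restriction $g(\bm{\theta}^i)=\bm{\beta}^\intercal\bm{\theta}^i$, each component $g_f(\bm{\theta}^i;\bm{\beta})=\bm{\beta}_f^\intercal\bm{\theta}^i$ and $g_{\bm x}(\bm{\theta}^i;\bm{\beta})=(\bm{\beta}_{x_1}^\intercal\bm{\theta}^i,\dots,\bm{\beta}_{x_p}^\intercal\bm{\theta}^i)$ is \emph{linear in $\bm{\beta}$} for the fixed data $\bm{\theta}^i$. For the first term of \eqref{eqn:RC}, condition (1) states that $\bm{x}\mapsto f(\bm{x};\bm{\theta}^i)$ is affine, so the composition $\bm{\beta}\mapsto f(g_{\bm x}(\bm{\theta}^i;\bm{\beta});\bm{\theta}^i)$ is affine in $\bm{\beta}$ (an affine map applied to a linear one). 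Hence $g_f(\bm{\theta}^i;\bm{\beta})-f(g_{\bm x}(\bm{\theta}^i;\bm{\beta});\bm{\theta}^i)$ is a difference of affine functions, and therefore itself affine in $\bm{\beta}$. Squaring this affine map gives a convex function, because $t\mapsto t^2$ is convex and the composition of a convex function with an affine map is convex; scaling by $\lambda_{C_1}\geq 0$ preserves convexity.

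For the second term, condition (2) gives directly that $\bm{x}\mapsto\delta(\bm{x},\mathbb{X}(\bm{\theta}^i))$ is convex. Composing this convex function with the affine (indeed linear) map $\bm{\beta}\mapsto g_{\bm x}(\bm{\theta}^i;\bm{\beta})$ yields a convex function of $\bm{\beta}$, and scaling by $\lambda_{C_2}\geq 0$ keeps it convex. Adding the two convex terms, summing over $i$ with the weights $w^i\geq 0$, and invoking closure of convexity under non-negative combinations then completes the argument.

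The proof is essentially a bookkeeping exercise in convexity-preserving operations, so I expect no serious obstacle; the one point that requires care is that convexity is claimed in the \emph{parameter} variable $\bm{\beta}$, whereas the hypotheses (1)--(2) are phrased for the \emph{decision} variable $\bm{x}$. The bridge is precisely the linearity of $g_{\bm x}(\bm{\theta}^i;\cdot)$ in $\bm{\beta}$, which lets me pull the hypotheses through the affine-composition rule; note in particular that condition (1) being \emph{affine} (rather than merely convex) is exactly what is needed, since a convex $f(g_{\bm x}(\cdot))$ would make the residual concave and its square need not be convex. I would also make explicit the implicit assumption $\lambda_{C_1},\lambda_{C_2}\geq 0$, and remark that the result applies to the constraint-violation distance \eqref{eq:delta_constraints_slacks} whenever each $\bm{x}\mapsto\gamma_t(\bm{x},\bm{\theta}^i)$ is convex, as then each $\max\{0,\gamma_t\}$ is convex and condition (2) holds automatically.
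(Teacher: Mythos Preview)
Your proposal is correct and matches the paper's approach: the paper simply states that the result follows from applying composition rules of convex functions (citing Boyd and Vandenberghe), and your write-up spells out exactly those rules---affine composition, squaring an affine map, and non-negative combinations---in the level of detail the paper omits.
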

The proof of this proposition follows from applying composition rules of convex functions \cite{boyd2004convex}. When the conditions in this proposition are satisfied, every local minimum of the optimization problem \eqref{eqn:CLEMObeta} is a global minimum. We can use first-order methods to find such a minimum if the functions are differentiable. We note that for $\delta$ as defined in \eqref{eq:delta_constraints_slacks}, we have that \eqref{eqn:RC} is convex in $\bm{\beta}$ if the functions $\bm{x}\mapsto\gamma_t(\bm{x},\bm{\theta}^i)$ are convex in $\bm{x}$ for $t\in[T]_1$ since $x \mapsto \max\{0, x\}$ is convex and nondecreasing in $x$. Assuming that $\bm{\beta}$ comes from a bounded space, we can then use the subgradient algorithm with constant step size and step length and ensure convergence to an $\epsilon$-optimal point within a finite number of steps in $\mathcal{O}(1/\epsilon^2)$ \cite{boyd2003subgradient}. 

\textbf{Weights.} We define the weights (similarly to LIME) as the radial basis function kernel with kernel parameter $\nu$ and distance function $d$,
\begin{equation}\label{eqn:rbf_weight}
w^i = \exp(-d(\bm{\theta}^i,\bm{\theta}^0)^2/\nu^2),\quad i\in[N]_0.
\end{equation}

\textbf{Sampling.} We recall that contrary to ML models, optimization models do not require model training per se. Therefore, $\bm{\theta}^i$ cannot be sampled according to the train data distribution. Depending on the context, $\bm{\theta}^i$ can be sampled from relevant distributions, or with pre-determined rules, \textit{e.g.}, discretization. Besides, we note that for some values of $\bm{\theta}^i$ the optimization problem might be infeasible or unbounded. We therefore ensure the generated dataset $\mathcal{D}$ contains only feasible, bounded instances of the optimization problem.

\textbf{Binary Decision Variables.} We opt for logistic regression to obtain interpretable surrogate models for the output components of the optimization problem that are restricted to binary values. Let $\mathcal{B}\subseteq \{f,x_1,\hdots,x_p\}$ be the set of binary components. Then, for a binary component $c\in\mathcal{B}$, we consider predictors of the form $g_{c}(\bm{\theta}) = \sigma( \bm{\beta^{\intercal}}_{c} \bm{\theta})$ with $\sigma: \mathbb R \to [0,1]$ the sigmoid function. The relative values of vector $\bm{\beta}_{c}$ then tell the user the feature importance for the probability of the component being $0$ or $1$. For the binary components, we measure the prediction accuracy using the log-loss. The total accuracy loss then becomes 
\begin{equation}\label{eqn:bin_loss}
\lambda_{A_1}\sum\nolimits_{c\in\overline{\mathcal{B}}}\|\bm{\beta^{\intercal}}_{c} \bm{\theta}^i - h_{c}(\bm{\theta}^i)\|^2 - \lambda_{A_2}\sum\nolimits_{c\in\mathcal{B}}h_{c}(\bm{\theta}^i) \ln(\sigma(\bm{\beta^{\intercal}}_{c} \bm{\theta}^i))+(1-h_{c}(\bm{\theta}^i))\ln(1-\sigma(\bm{\beta^{\intercal}}_{c} \bm{\theta}^i)),\notag
\end{equation}
where $ \overline{\mathcal{B}}=\{f, x_1, \hdots, x_p \} \setminus \mathcal{B}$ and $ \lambda_{A_1}, \lambda_{A_2}\geq 0$ are hyperparameters to balance the different losses.
\begin{algorithm}[tb]
\vskip -0.05cm
   \caption{CLEMO}
   \label{alg:CLEMO}
\begin{algorithmic}
   \STATE {\bfseries Input:} Optimization problem with parameter $\bm{\theta}^0$, solution algorithm $h$, family of functions $\mathcal{G}$
   \STATE $\bm{\theta}^i\leftarrow \textsl{sample\_around}(\bm{\theta}^0)$ for $i\in [N]_1$
   \STATE $(f(\bm{x}^i;\bm{\theta}^i),\bm{x}^i) \leftarrow h$ applied to \eqref{eqn:sampleproblem} with  $\bm{\theta}^i$ for $i\in[N]_0$
   \STATE $w^i \leftarrow$  weight function \eqref{eqn:rbf_weight} for $i\in [N]_0$
   \STATE $\mathcal{D}\leftarrow  \{(\bm{\theta}^i, (f(\bm{x}^i;\bm{\theta}^i),\bm{x}^i)):i\in [N]_0\}$
   \STATE $g^*\leftarrow $ solution of Problem \eqref{eqn:CLEMO} over $\mathcal{G}$
    \STATE {\bfseries Return:} Explainable function $g^*$
\end{algorithmic}
\vskip -0.05cm
\end{algorithm}

The whole procedure of CLEMO is shown in Algorithm \ref{alg:CLEMO}. For more details regarding the substeps we refer to Algorithms \ref{alg:CLEMO_dataset} and \ref{alg:CLEMO_surr} in the appendix.

\textbf{Guaranteed Objective Coherence in the Linear Case.} Assume our present problem \eqref{eqn:sampleproblem} has a linear objective function, \textit{i.e}., the objective is of the form $f(\bm{x};\bm{\theta})=\bm{\hat c}^\intercal \bm{x}$,
and assume that only the feasible region is sensitive, \textit{i.e.}, $\bm{\hat c}$ remains fixed. In this case, we can fit $p+1$ independent linear models for each component in  $c\in\{ f, x_1,\ldots ,x_p\}$ by solving the classical weighted mean-square problem
\[\min_{\bm{\beta}_{c}} \ \sum\nolimits_{i=0}^{N} w^i\| \bm{\beta}_{c}^\intercal\bm{\theta}^i-h_c(\bm{\theta}^i)\|^2. \]
If the minimizers are unique, the corresponding linear predictors provably fulfill the coherence condition \eqref{eq:coherence1}, \textit{i.e.}, in this case we do not need to apply the regularizer $R_C$ to achieve coherence condition \eqref{eq:coherence1}. However, it may happen that condition \eqref{eq:coherence2} is violated as the example from the introduction shows. A proof of the latter coherence statement can be found in Section \ref{sec:app3} (\cref{thm:provably_coherent}).

\section{Experiments}\label{sec:Experiments}
In this section, we present three experiments. Each experiment considers a distinct optimization model and solver. The first experiment is used as a proof of concept, where we will show that CLEMO approximates the optimal decision and objective value function well for an instance of the Shortest Path Problem (SPP) with a single sensitive parameter. For a second experiment, in an extensive study, we consider exact solutions of various instances of the Knapsack Problem (KP). We compare the quality of explanations found by CLEMO to benchmarks by analyzing local accuracy, coherence, and stability of the found explanations when subjected to resampling. Lastly, we generate explanations for the Google OR-Tools heuristic \cite{ortools_routing} applied to an instance of the Capacitated Vehicle Routing Problem (CVRP). The code of our experiments can be found at \url{https://github.com/daanotto/CLEMO}. All experiments are done on a computer with a 13th Gen Intel(R) Core(TM) i7-1355U 1.70 GHz processor and 64 GB of installed RAM.

\textbf{Setup.} Unless stated otherwise, all upcoming experiments use the following setup. Given an optimization problem for a given parameter vector $\bm{\theta}^0$, we create a training data set $\mathcal{D}$ of size 1000 by sampling $\bm{\theta}^i\sim \mathcal{N}(\bm{\theta}^0, 0.2\bm{\theta}^0)$. The sample's proximity weights $w^i$ are determined using the rbf kernel \eqref{eqn:rbf_weight} with Euclidean distance and parameter $\nu$ equal to the mean distance to $\bm{\theta}^0$ over the training data set $\mathcal{D}$. 

As a benchmark for CLEMO, we consider generating explanations with the LIME-type method described in Section \ref{sec:Methodology}. We solve problem \eqref{eqn:independentCLEMO} where we fit logistic regression models for all binary output components and linear models for all other output components without any complexity regularization. We refer to this benchmark as LR.

For CLEMO we use the loss function stated in \eqref{eqn:CLEMObeta}, where $\ell_A$ is given as in \eqref{eqn:bin_loss} and $R_C$ is given as in \eqref{eqn:RC} with $\delta$ defined as in \eqref{eq:delta_constraints_slacks}. This way we can compare CLEMO to the benchmark on local accuracy \eqref{eqn:bin_loss} and incoherence \eqref{eqn:RC}. In CLEMO, each term of the total loss function is weighted with hyperparameters $\lambda_{A_1}, \lambda_{A_2}, \lambda_{C_1},$ and $\lambda_{C_2}$ as 
\[
    \lambda_{A_1}\ell_{A_1}(g(\bm{\theta}^i), h(\bm{\theta}^i)) + \lambda_{A_2}\ell_{A_2}(g(\bm{\theta}^i), h(\bm{\theta}^i))
    + \lambda_{C_1}R_{C_1}(g(\bm{\theta}^i))+ \lambda_{C_2}R_{C_2}(g(\bm{\theta}^i)).
\]
To determine the hyperparameters, we calculate the weights using the LR benchmark solution to ensure that each loss term contributes to the total loss with similar order of magnitude. To this end, let $\mathcal{L}_j$ be the value of loss term $j$ for $j\in\{A_1,A_2,C_1,C_2\}$ when the LR benchmark solution is evaluated, and let $\mathcal{L}_{\max}$ denote the largest of the four loss terms. For our experiments, we set $\lambda_j=1$ when $\mathcal{L}_j=\mathcal{L}_{\max}$ or when $\mathcal{L}_j=0$, and set $\lambda_j = 0.5  \mathcal{L}_{\max}/ \mathcal{L}_j$ otherwise. 
We solve Problem \eqref{eqn:CLEMObeta} using the \texttt{SLSQP} solver of the \texttt{scipy} package. We set a maximum of 1000 iterations and warm-start the method with the LR benchmark solution. 


\subsection{Shortest Path Problem}
As a first experiment, we explain an instance of the Shortest Path where possible cost-changes depend on one single parameter. An instance of the SPP is given by a connected graph $G=(V, E, \bm{c})$, with nodes $V$, edges $E$ and edge-costs $\bm{c}$, and specified start and terminal nodes $s,t\in V$. The objective is to find a path between $s$ and $t$ of minimum costs. Several methods exist for solving the SPP in polynomial time \cite{gallo_shortest_1988}. Here, we use Dijkstra's algorithm. 

We study the parametric version of the shortest path problem (SPP-$\theta$), denoted by $G=(V, E, \bm{c}+\theta \bm{\tilde{c}})$. The edge costs are parametrized by the value $\theta$ and are given as the original edge costs ($\bm{c}$) plus $\theta$ times a perturbation cost vector ($\bm{\tilde{c}}$). The decision variables are denoted by $x_{jk}$ and equal 1, if edge $(v_j,v_k)$ is used in the solution and 0, otherwise. The parametrized SPP is then given as
$\min\{(\bm{c}+\theta \bm{\tilde{c}})^\intercal\bm{x}: \bm{x}\in\mathbb{X}_{SPP}\}$, 
where $\mathbb{X}_{SPP}$ denotes the set of incidence vectors of all paths in the graph. The full formulation can be found in \cref{eqn:SPP-theta-app} of the appendix. We examine how the objective value and decision variable values of the original instance are affected by parameter~$\theta$. 
We consider the instance of SPP-$\theta$ as displayed in \cref{fig:SPP_prm} with $\theta^0=0$ as the present problem. By varying $\theta$, the optimal shortest $(s,t)$-path and its optimal value changes.

\begin{figure}[t]
\vskip -0.2in
\begin{center}
\subfigure[Instance of SPP-$\theta$, with in blue the optimal $s,t$-route.]{\label{fig:SPP_prm}\includegraphics[width=0.38\columnwidth]{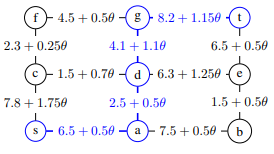}}
\subfigure[CLEMO prediction of objective value.]{\label{fig:SPP_prm_RLR_obj}\includegraphics[width=0.3\columnwidth]{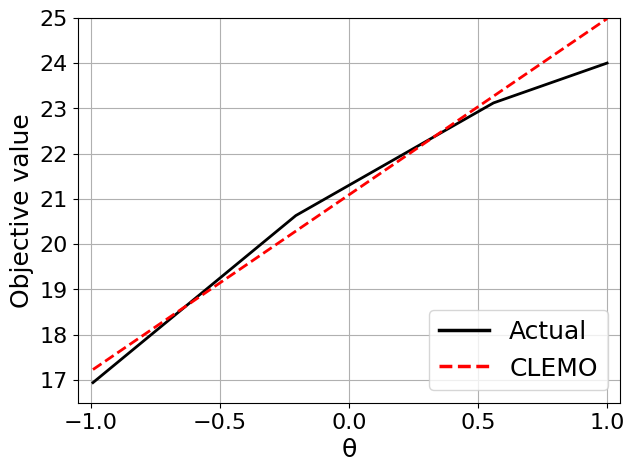}}
\subfigure[CLEMO prediction of decision variable $x_{(s,a)}$.]{\label{fig:SPP_prm_RLR_edges}\includegraphics[width=0.3\columnwidth]{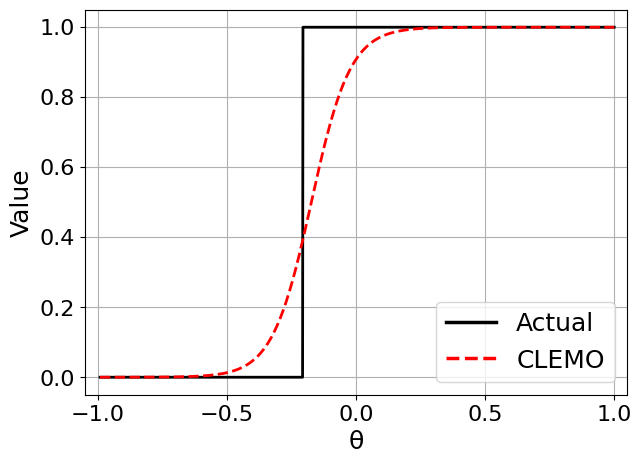}}
\caption{Solution of shortest path of SPP-$\theta$ instance as determined by Dijkstra's Algorithm and as predicted by CLEMO.}
\label{fig:SPP_prm_opt}
\end{center}
\vskip -0.25in
\end{figure}

We sample $\theta$ uniformly on the interval $[-1,1]$ and run CLEMO on the sampled data. In \cref{fig:SPP_prm_RLR_obj}, we show the true dependency of the optimal value of SPP-$\theta$ and $\theta$ and the prediction of CLEMO. Figure \ref{fig:SPP_prm_RLR_edges} shows the same for the dependency of three selected decision variable values. For the predictions of all decision variables, see \cref{fig:app_SPP} in the appendix. Both results show that CLEMO manages to generate locally accurate predictions. In \cref{table:SPP_loss}, we show the accuracy and the incoherence of CLEMO and the LR benchmark. 
We can conclude that our method finds significantly more coherent explanations without considerably conceding accuracy.
\begin{table}[H]
\vskip -0.5cm
\caption{Weighted accuracy loss and incoherence of explaining Dijkstra's algorithm applied to SPP-$\theta$ using the LR benchmark and using CLEMO.}
\label{table:SPP_loss}
\begin{center}
\begin{small}
\begin{tabular}{c|cc|cc}
                  & \multicolumn{2}{c|}{Accuracy ($\ell_A$)}       & \multicolumn{2}{c}{Incoherence ($R_C$)}       \\
 &
  \multicolumn{1}{c|}{\begin{tabular}[c]{@{}c@{}}Objective value\end{tabular}} &
  \begin{tabular}[c]{@{}c@{}}Decision vector\end{tabular} &
  \multicolumn{1}{c|}{Objective} &
  \begin{tabular}[c]{@{}c@{}}Feasible region\end{tabular} \\ \hline
LR & \multicolumn{1}{c|}{\textbf{32.03}} & 648.06 & \multicolumn{1}{c|}{112.52} & 54.55 \\
CLEMO & \multicolumn{1}{c|}{32.12} & \textbf{646.28} & \multicolumn{1}{c|}{\textbf{6.91}}   & \textbf{21.08}
\end{tabular}
\end{small}
\end{center}
\vskip -0.5cm
\end{table}

\subsection{Knapsack Problem}
Next, we present an extensive study on the Knapsack problem (KP). In this problem, we are given a set of items each with a corresponding value $v_j$ and weight $w_j$. The goal is to decide how much of each item should be chosen to maximize the total value while not exceeding the capacity, which \textit{w.l.o.g.} we set to $1$. Formulated as a linear problem this becomes $\max \{ \bm{v}^\intercal\bm{x}:\bm{w}^\intercal\bm{x}\leq 1, \bm{x}\in [0,1]^p\}$.
We consider the parametrized KP, by setting $\bm{\theta}=(\bm{v},\bm{w})$ and use Gurobi \cite{gurobi} to solve to optimality. This experiment compares CLEMO with two benchmark explanation methods: independently fitting the components using (i) a linear regression model (LR), and (ii) a decision tree regressor (DTR) with a maximum depth of 5, and minimum samples per leaf of 50. Here, we consider 40 instances of the KP each with $p=25$ items. The 40 instances are divided over four instance types as described by \citep{pisinger_where_2005}: 1) uncorrelated, 2) weakly correlated, 3) strongly correlated, 4) inversely strongly correlated.

In \cref{tab:KS_loss}, we see that compared to a linear regression approach the linear model found by CLEMO reduces the weighted incoherence in the objective and the constraint by more than 50\% and 99\% respectively, while the weighted accuracy loss increased only by roughly 20\%. In \cref{fig:app_KS_type1,fig:app_KS_type2,fig:app_KS_type3,fig:app_KS_type4} in the appendix, we plotted the accuracy and incoherence of each instance to strengthen our conclusion.

\begin{table}[t]
\vskip -0.65cm
\caption{Mean $(\mu)$ and standard deviation ($\sigma$) of weighted accuracy loss and incoherence for the KP solved optimally. On the right, the mean stability measures over 10 instances per type of KP.}
\label{tab:KS_loss}
\begin{center}
\begin{small}
\begin{tabular}{cccc|cccc|l|ccc}
 &
   &
  \multicolumn{2}{c|}{} &
  \multicolumn{4}{c|}{Incoherence ($R_C$)} &
   &
  \multicolumn{1}{l}{} &
  \multicolumn{1}{l}{} &
  \multicolumn{1}{l}{} \\
\multicolumn{1}{l}{} &
  \multicolumn{1}{l|}{} &
  \multicolumn{2}{c|}{Accuracy ($\ell_A$)} &
  \multicolumn{2}{c|}{Objective} &
  \multicolumn{2}{c|}{Feasible region} &
   &
  \multicolumn{3}{c}{Stability} \\
 &
  \multicolumn{1}{c|}{Method} &
  $\mu$ &
  $\sigma$ &
  $\mu$ &
  \multicolumn{1}{c|}{$\sigma$} &
  $\mu$ &
  $\sigma$ &
   &
  Std. &
  Normalized Std. &
  FSI \\ \cline{1-8}\cline{10-12} 
\parbox[t]{2mm}{\multirow{3}{*}{\rotatebox[origin=c]{90}{Type 1}}} &
  \multicolumn{1}{c|}{DTR} &
  \textbf{405} &
  \textbf{97.4} &
  6.48 &
  \multicolumn{1}{c|}{2.55} &
  22.50 &
  4.99 &
  $\:$ &
  0.18 &
  5.36 &
  2.02 \\
 &
  \multicolumn{1}{c|}{LR} &
  437 &
  140 &
  0.24 &
  \multicolumn{1}{c|}{0.15} &
  5.49 &
  1.51 &
   &
  0.22 &
  \textbf{1.70} &
  \textbf{2.44} \\
 &
  \multicolumn{1}{c|}{CLEMO} &
  479 &
  157 &
  \textbf{0.08} &
  \multicolumn{1}{c|}{\textbf{0.06}} &
  \textbf{0.01} &
  \textbf{0.004} &
   &
  \textbf{0.18} &
  1.75 &
  2.26 \\ \cline{2-8}\cline{10-12}  
\parbox[t]{2mm}{\multirow{3}{*}{\rotatebox[origin=c]{90}{Type 2}}} &
  \multicolumn{1}{c|}{DTR} &
  \textbf{868} &
  \textbf{67.4} &
  20.09 &
  \multicolumn{1}{c|}{2.27} &
  38.55 &
  2.03 &
   &
  \textbf{0.14} &
  2.74 &
  3.00 \\
 &
  \multicolumn{1}{c|}{LR} &
  1076 &
  99.4 &
  1.01 &
  \multicolumn{1}{c|}{0.12} &
  9.49 &
  0.68 &
   &
  0.19 &
  \textbf{0.74} &
  \textbf{3.40} \\
 &
  \multicolumn{1}{c|}{CLEMO} &
  1203 &
  113 &
  \textbf{0.38} &
  \multicolumn{1}{c|}{\textbf{0.06}} &
  \textbf{0.03} &
  \textbf{0.01} &
   &
  0.16 &
  0.81 &
  3.34 \\ \cline{2-8}\cline{10-12} 
\parbox[t]{2mm}{\multirow{3}{*}{\rotatebox[origin=c]{90}{Type 3}}} &
  \multicolumn{1}{c|}{DTR} &
  \textbf{865} &
  \textbf{66.8} &
  28.45 &
  \multicolumn{1}{c|}{3.36} &
  39.41 &
  2.02 &
   &
  \textbf{0.14} &
  2.77 &
  2.91 \\
 &
  \multicolumn{1}{c|}{LR} &
  1061 &
  97.8 &
  1.39 &
  \multicolumn{1}{c|}{0.16} &
  9.90 &
  0.64 &
   &
  0.20 &
  \textbf{0.71} &
  3.22 \\
 &
  \multicolumn{1}{c|}{CLEMO} &
  1189 &
  113 &
  \textbf{0.58} &
  \multicolumn{1}{c|}{\textbf{0.07}} &
  \textbf{0.04} &
  \textbf{0.01} &
   &
  0.16 &
  0.79 &
  \textbf{3.23} \\ \cline{2-8}\cline{10-12}  
\parbox[t]{2mm}{\multirow{3}{*}{\rotatebox[origin=c]{90}{Type 4}}} &
  \multicolumn{1}{c|}{DTR} &
  \textbf{893} &
  \textbf{56.5} &
  13.80 &
  \multicolumn{1}{c|}{2.16} &
  37.74 &
  2.35 &
   &
  \textbf{0.14} &
  2.52 &
  2.99 \\
 &
  \multicolumn{1}{c|}{LR} &
  1111 &
  77.6 &
  0.70 &
  \multicolumn{1}{c|}{0.11} &
  8.94 &
  0.74 &
   &
  0.18 &
  \textbf{0.66} &
  3.31 \\
 &
  \multicolumn{1}{c|}{CLEMO} &
  1241 &
  87.56 &
  \textbf{0.27} &
  \multicolumn{1}{c|}{\textbf{0.06}} &
  \textbf{0.03} &
  \textbf{0.01} &
   &
  0.16 &
  0.77 &
  \textbf{3.38}
\end{tabular}
\end{small}
\end{center}
\vskip -0.5cm
\end{table}
\begin{figure}[t]
\centering
\begin{minipage}[t]{0.4\textwidth}
 \vspace{-40mm}
\centering
  \captionof{table}{Runtime of different explanation approaches for various sizes of KP.}
\label{tab:KS_runtime}
\begin{small}
\begin{tabular}{cc|ccc}
\multicolumn{2}{c|}{KP}                  & \multicolumn{3}{c}{Runtime (s)}                                                    \\
\multicolumn{1}{c|}{\#items} &
  \#features &
  \multicolumn{1}{c|}{DTR} &
  \multicolumn{1}{c|}{LR} &
  CLEMO \\ \hline
\multicolumn{1}{c|}{5}  &
  10 &
  \multicolumn{1}{c|}{0.0189} &
  \multicolumn{1}{c|}{\textbf{0.0031}} &
  8.54 \\
\multicolumn{1}{c|}{10}  & 20 & \multicolumn{1}{c|}{\textbf{0.0396}} & \multicolumn{1}{c|}{0.0790}          & 50.0 \\
\multicolumn{1}{c|}{20}  & 40 & \multicolumn{1}{c|}{0.183}           & \multicolumn{1}{c|}{\textbf{0.0830}} & 413  \\
\multicolumn{1}{c|}{40}  &
  80 &
  \multicolumn{1}{c|}{0.683} &
  \multicolumn{1}{c|}{\textbf{0.316}} &
   $>1000$
\end{tabular}
\end{small}
\end{minipage}
\hfill
\begin{minipage}[t]{0.44\textwidth}
  \centering
    \includegraphics[width=\linewidth]{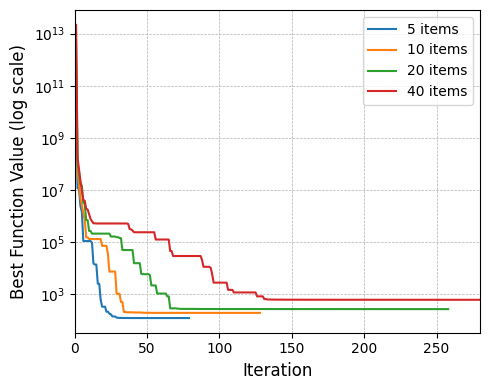}
    \vspace*{-0.60cm}
    \captionof{figure}{Convergence of CLEMO over \texttt{SLSQP} iterations for different sizes of KP.}
    \label{fig:KS_Convergence}
\end{minipage}
\vskip -0.6cm
\end{figure}
Besides, as datasets are randomly generated, we measure the stability of explanations over resampling. For the KP, we analyze the stability of CLEMO by using 10 different randomly generated datasets, resulting in 10 surrogate models. To quantify stability we use the (normalized) standard deviation of the feature contributions of $g$ which is also used to examine the stability of LIME \cite{shankaranarayana_alime_2019}. In \cref{tab:KS_loss}, we consider the (normalized) standard deviation of the contribution of the top-5 most contributing, nonzero features for each component of $h(\bm{\theta})$. Besides, we examine the feature stability index (FSI), which is based on the variables stability index as presented in \cite{visani_statistical_2022}. The FSI measures how much the order in feature contribution over the resamples on average coincides. Here, it takes values between 0 and 5, where a higher FSI indicates more stable explanations. An extensive description of the FSI can be found in section \ref{sec:KP_app} in the appendix. From \cref{tab:KS_loss}, we can conclude that the stability of CLEMO is comparable to the benchmark approaches.

\textbf{Runtime.} To compare the runtime of CLEMO to benchmark methods, we analyze the runtime on Knapsack problems with 5, 10, 20, and 40 items. In \cref{tab:KS_runtime}, we see CLEMO takes longer to find an explanation. Looking at \cref{fig:KS_Convergence}, we observe that CLEMO efficiently converges to a solution. Hence, early stopping could reduce runtimes while still ensuring more coherent explanations.
\subsection{Vehicle Routing Problem}
An instance of CVRP is given by a complete graph $G=(V,A)$, where $V$ consists of a depot node $v_0$ and $n$ client nodes each with a corresponding demand $d_j$. Moreover, each arc $(v_j,v_k)$ has associated costs $c_{jk}$. Lastly, there are $m$ vehicles, each with a capacity of $M$. The goal of the problem is to find at most $m$ routes of minimum costs such that each route starts and ends at the depot, each client is visited exactly once, and the total demand on each route does not exceed the vehicle capacity. To formulate $R_C$, we use the Miller-Tucker-Zemlin formulation for the CVRP, which can be found in \cref{eqn:CVRP-app} in the appendix. The decision variables are then denoted by $x_{jk}$ and equal $1$ if arc $(v_j,v_k)$ is used in the solution, and $0$, otherwise. 
For this experiment, we consider the parameter vector consisting of the demands $\bm{d}$ and the costs of the arcs from the clients to the depot $\bm{c_0}$, \textit{i.e.}, $\bm{\theta}=(\bm{d},\bm{c_0})$. The present problem has symmetric costs and consists of 16 clients and 4 vehicles. As a solver for this NP-hard problem, we let the Google OR-Tools heuristic search for a solution for 5 seconds \cite{ortools_routing}. We aim for explanations for the objective value, and which clients are visited before returning to the depot, \textit{i.e.}, the decision variables $x_{j0}$ for $j\in [n]_1$.

As shown in \cref{table:CVRP_loss}, the explanation found by CLEMO is significantly more coherent without a considerable loss in accuracy compared to the LR benchmark. We visualize the explanations found by CLEMO in \cref{fig:VRP_n17_obj}, where we first see the solution to the present problem found by Google OR-Tools in gray. Next, the feature contribution of the demands and costs are visualized via node and edge colors respectively.
Combined with an overview of the top 10 most contributing features, this shows which features are the key components influencing the objective value as found by the solver. Thus, \cref{fig:VRP_n17_obj} tells us that the objective value is mainly affected by the distance towards the nodes far from the depot. In \cref{fig:VRP_n17_x20} in the appendix, we present an additional explanation for the decision variable $x_{20}$.
\begin{table}[htb]
\vskip -0.65cm
\caption{Weighted accuracy loss and incoherence of explaining Google OR-Tools applied to the CVRP instance using the LR benchmark and using CLEMO.}
\label{table:CVRP_loss}
\begin{center}
\begin{small}
\begin{tabular}{c|cc|cc}
                  & \multicolumn{2}{c|}{Accuracy ($\ell_A$)}       & \multicolumn{2}{c}{Incoherence ($R_C$)}       \\
 &
  \multicolumn{1}{c|}{\begin{tabular}[c]{@{}c@{}}Objective value\end{tabular}} &
  \begin{tabular}[c]{@{}c@{}}Decision vector\end{tabular} &
  \multicolumn{1}{c|}{Objective} &
  \begin{tabular}[c]{@{}c@{}}Feasible region\end{tabular} \\ \hline
LR & \multicolumn{1}{c|}{\textbf{4.24}} & 1207 & \multicolumn{1}{c|}{12.35} & 802.55 \\
CLEMO & \multicolumn{1}{c|}{4.24} & \textbf{1198} & \multicolumn{1}{c|}{\textbf{11.77}} & \textbf{780.26}
\end{tabular}
\end{small}
\end{center}
\vskip -0.65cm
\end{table}
\begin{figure*}[b]
\vskip -0.2in
\begin{center}
  \includegraphics[width=0.9\textwidth]{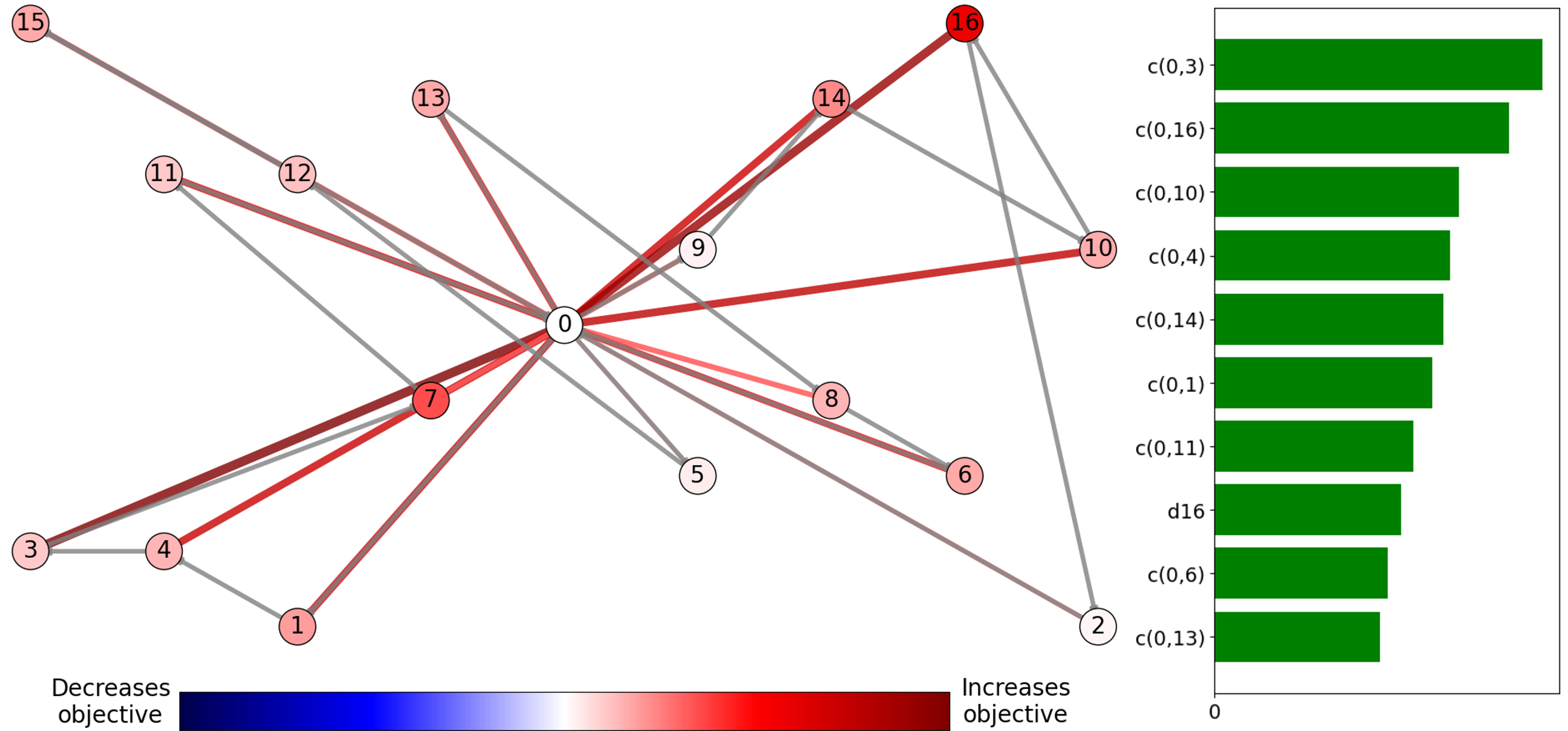}
  \caption{Explanation as found by CLEMO for the objective value visualized in the present problem network structure. Also, the top 10 relative feature contributions is depicted on the right.} \label{fig:VRP_n17_obj}
    \end{center}
\vskip -0.2in
\end{figure*}
\section{Conclusion \& Limitations}
In this paper, we propose CLEMO, a sampling-based method that can be used to explain arbitrary exact or heuristic solution algorithms for optimization problems. Our method provides local explanations for the objective value and decision variables of mathematical optimization models. Contrary to existing methods, CLEMO enforces explanations that are coherent with the underlying model structure which enhances transparent decision-making. By applying CLEMO to various optimization problems we have shown that we can find explanations that are significantly more coherent than benchmark explanations generated using LIME without substantially compromising prediction accuracy. At the same time, including coherence losses to CLEMO leads to longer runtimes.

This work focuses on explaining the objective value and decision variables. However, one could easily extend the concept to explanations of other components such as constraint slacks, runtime, optimality gap, etc. For now, CLEMO uses parametric regression models for explanations. Another extension of our work could be to consider other types of interpretable functions such as decision trees. Lastly, CLEMO could be a useful method to explain synergies between ML and optimization models, \textit{e.g.}, in predict-then-optimize models.

\bibliographystyle{plainnat}
\bibliography{references_new}

\newpage
 
\appendix
\onecolumn
\section{Appendix}
\label{sec:app}
\subsection{Mathematical Models of The Optimization Problems}
In this section, we present the formulation of the Shortest Path problem \cref{eqn:SPP-theta-app} and the Capacitated Vehicle Routing problem \cref{eqn:CVRP-app}. For the latter, we use the Miller-Tucker-Zemlin formulation as described in \cite{kara_note_2004}.
\paragraph{Shortest Path Problem}
\begin{align}\label{eqn:SPP-theta-app}
 \min &  \ (\bm{c}+\theta \bm{\hat{c}})^\intercal\bm{x}&\\
 \text{s.t.} & \sum_{(s,j)\in E} x_{s,j} -  \sum_{(j,s)\in E} x_{j,s}=1&\notag\\
  & \sum_{(j,t)\in E} x_{j,t} -  \sum_{(t,j)\in E} x_{t,j}=1,&\notag\\
 & \sum_{(j,k)\in E} x_{j,k} -  \sum_{(k,l)\in E} x_{k,l}=0, & \forall k \neq s,t,\notag\\
& x_{e}\in\{0,1\},& \forall e\in E. \notag
\end{align}
\paragraph{Capacitated Vehicle Routing Problem}
\begin{align}\label{eqn:CVRP-app}
 \min &  \ \sum_{j=0}^n\sum_{k=0,k\neq j}^n c_{jk}x_{jk}&\\
 \text{s.t.} & \sum_{k=1}^n x_{1k} \leq m,&\notag\\
  & \sum_{j=1}^n x_{j1} \leq m,&\notag\\
 & \sum_{k=1}^n x_{1k} \geq 1,&\notag\\
 & \sum_{j=1}^n x_{j1} \geq 1,&\notag\\
  & \sum_{k=0, k\neq j}^n x_{jk}=1, &j\in[n]_1,\notag\\
   &\sum_{j=0, j\neq k}^n x_{jk}=1, &k\in[n]_1,\notag\\
   &u_j-u_j+Mx_{jk}\leq M-d_k, &j,k \in[n]_1,\quad j\neq k\notag\\
   &d_j\leq u_j\leq M, &j\in[n]_1,\notag\\
& x_{jk}\in\{0,1\}, &j,k\in[n]_0,\quad j\neq k.\notag
\end{align}

\subsection{Detailed Algorithm}
Here, we present an extensive description of our explanation method CLEMO as used in the experiments. It consists of two parts, (i) creating a training dataset (\cref{alg:CLEMO_dataset}), and (ii) finding a surrogate model (\cref{alg:CLEMO_surr}).
\begin{algorithm}[H]
   \caption{CLEMO - Creating a dataset}
   \label{alg:CLEMO_dataset}
\begin{algorithmic}
   \STATE {\bfseries Input:} Optimization problem with parameter $\bm{\theta}^0$ and solver algorithm $h$
   \STATE Initialize samples $=\{\bm{\theta}^0\}$, targets $=\{(f(\bm{x}^0;\bm{\theta}^0), \bm{x}^0)\}$, weights $=\emptyset$, distances $=\{0\}$
   \WHILE{\#samples $<1000$}
   \STATE $\bm{\theta}^i \sim \mathcal{N}(\bm{\theta}^0, 0.2\bm{\theta}^0)$
   \IF{Optimization model is feasible and bounded for $\bm{\theta}^i$}
   \STATE samples $\leftarrow$ samples $\cup\{\bm{\theta}^i\}$
   \STATE $(f(\bm{x}^i;\bm{\theta}^i),\bm{x}^i)\leftarrow h$ applied to $\bm{\theta}^i$-problem
   \STATE targets $\leftarrow$ targets $\cup\{(f(\bm{x}^i;\bm{\theta}^i),\bm{x}^i)\}$
   \STATE distances $\leftarrow$ distances $\cup\{\textsl{Euclidean distance}(\bm{\theta}^0,\bm{\theta}^i)\}$
   \ENDIF
   \ENDWHILE
   \STATE $\overline{d}=\leftarrow$ \textsl{average}(distances)
   \FOR{$\bm{\theta}^i$ in samples}
   \STATE weights $\leftarrow$ weights $\cup\{\textsl{rbf}(\bm{\theta}^0, \bm{\theta}^i, \overline{d})\}$
   \ENDFOR
   \STATE {\bfseries Return:} $\mathcal{D}$ $\leftarrow$ (samples, targets, weights)
\end{algorithmic}
\end{algorithm}

\begin{algorithm}[H]
   \caption{CLEMO - Finding surrogate model}
   \label{alg:CLEMO_surr}
\begin{algorithmic}
   \STATE {\bfseries Input:} Optimization problem, dataset $\mathcal{D}$, loss function consisting of components $\{\ell_{A_1},\ell_{A_2},R_{C_1},R_{C_2}\}$
   \FOR{output component $c\in\{f,x_1,\dots,x_p\}$ }
    \IF{$h(\bm{\theta})_c$ is a binary value}
    \STATE $(\bm{\beta}_{BM})_c\leftarrow$ \textsl{Logistic Regression fit}(samples, $h(\bm{\theta})_c$, weights)
    \ELSE
    \STATE $(\bm{\beta}_{BM})_c\leftarrow$ \textsl{Linear Regression fit}(samples, $h(\bm{\theta})_c$, weights)
    \ENDIF
   \ENDFOR
   \STATE $\mathcal{L}_j \leftarrow \{\textsl{loss}_j(\bm{\beta}_{BM}, \mathcal{D}) \mid \text{for }\textsl{loss}_j\in \{\ell_{A_1},\ell_{A_2},R_{C_1},R_{C_2}\}\}$
   \STATE $\mathcal{L}_{\max}\leftarrow$ \textsl{maximum}($\mathcal{L}_{A_1},\mathcal{L}_{A_2},\mathcal{L}_{C_1},\mathcal{L}_{C_2}$)
   \FOR{loss function component index $j$ in $\{A_1, A_2, C_1, C_2\}$}
   \IF{$\mathcal{L}_j \in\{ \mathcal{L}_{\max},0\}$}
   \STATE $\lambda_j\leftarrow 1$
   \ELSE
   \STATE $\lambda_j\leftarrow 0.5\mathcal{L}_{\max}/\mathcal{L}_j$
   \ENDIF
   \ENDFOR
   \STATE \textsl{total\_loss\_function} $\leftarrow \lambda_{A_1}\ell_{A_1} + \lambda_{A_2}\ell_{A_2}+ \lambda_{C_1}R_{C_1} +  \lambda_{C_2}R_{C_2}$
\STATE $\bm{\beta}_{CL}\leftarrow \textsl{argmin}_{\bm{\beta}}\:\textsl{total\_loss\_function}(\bm{\beta}, \mathcal{D})$ using $\bm{\beta}_{BM}$ as a warm start
   \STATE {\bfseries Return:} Interpretable function $\bm{\beta}_{CL}$
\end{algorithmic}
\end{algorithm}

\subsection{Coherence for Linear Optimization Problems}
\label{sec:app3}

In this section we prove the statement that independent fitting of linear predictors leads to objective coherence under certain assumptions. 

\begin{theorem}\label{thm:provably_coherent}
The minimizers of the weighted least-square problems
\[
\min_{\bm{\beta}_{f}} \ \sum_{i=0}^{N} w^i\| f(\bm{x}^i;\bm{\theta}^i) - \bm{\beta}_{f}^\intercal\bm{\theta}^i \|^2 \]
and 
\[
\min_{\bm{\beta}_{x_j}} \ \sum_{i=0}^{N} w^i\| \bm{x}^i_j - \bm{\beta}_{x_j}^\intercal\bm{\theta}^i \|^2 \quad j=1,\ldots ,p
\]
fulfill the coherence condition in \eqref{eq:coherence1}.
\end{theorem}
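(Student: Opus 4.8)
The plan is to exploit the linearity of the weighted least-squares estimator. First I would rewrite the target for coherence condition \eqref{eq:coherence1} in the linear setting. Since $f(\bm{x};\bm{\theta})=\bm{\hat c}^\intercal\bm{x}$ and the cost vector $\bm{\hat c}$ is fixed, condition \eqref{eq:coherence1} for an arbitrary $\bm{\theta}$ reads $\bm{\hat c}^\intercal g_{\bm{x}}(\bm{\theta})=g_f(\bm{\theta})$, i.e., $\sum_{j=1}^p \hat c_j\,\bm{\beta}_{x_j}^\intercal\bm{\theta}=\bm{\beta}_f^\intercal\bm{\theta}$. Because this must hold for every $\bm{\theta}$, it suffices to establish the coefficient identity $\bm{\beta}_f=\sum_{j=1}^p \hat c_j\,\bm{\beta}_{x_j}$ between the fitted objective model and the fitted variable models.

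Next I would write all $p+1$ regressions in a shared closed form. Let $\bm{\Theta}$ denote the design matrix whose $i$-th row is $(\bm{\theta}^i)^\intercal$ and $W=\mathrm{diag}(w^0,\dots,w^N)$ the diagonal weight matrix; these objects are identical across all $p+1$ problems, since every regression uses the same samples and the same weights. Under the stated uniqueness hypothesis the matrix $\bm{\Theta}^\intercal W\bm{\Theta}$ is invertible, so each minimizer has the normal-equation form $\bm{\beta}=(\bm{\Theta}^\intercal W\bm{\Theta})^{-1}\bm{\Theta}^\intercal W\,\bm{y}$, where $\bm{y}$ is the corresponding target vector. For the variable model $j$ the target is $\bm{y}_{x_j}=(x_j^0,\dots,x_j^N)^\intercal$, and for the objective model the target $\bm{y}_f$ has entries $f(\bm{x}^i;\bm{\theta}^i)=\bm{\hat c}^\intercal\bm{x}^i=\sum_{j=1}^p \hat c_j\,x_j^i$.

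The crux is a single linear-algebra observation: the objective target is exactly the same linear combination of the variable targets, $\bm{y}_f=\sum_{j=1}^p \hat c_j\,\bm{y}_{x_j}$. Since the linear map $\bm{y}\mapsto(\bm{\Theta}^\intercal W\bm{\Theta})^{-1}\bm{\Theta}^\intercal W\,\bm{y}$ is the same for all regressions, its linearity immediately yields $\bm{\beta}_f=\sum_{j=1}^p \hat c_j\,\bm{\beta}_{x_j}$, which is the coefficient identity from the first step. Substituting back gives $g_f(\bm{\theta})=\bm{\beta}_f^\intercal\bm{\theta}=\sum_{j=1}^p\hat c_j\,g_{x_j}(\bm{\theta})=\bm{\hat c}^\intercal g_{\bm{x}}(\bm{\theta})=f(g_{\bm{x}}(\bm{\theta});\bm{\theta})$ for every $\bm{\theta}$, establishing \eqref{eq:coherence1}.

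I do not expect a serious obstacle here; the argument is essentially the statement that the weighted least-squares projection commutes with taking linear combinations of the response vectors, together with the fact that, because $f$ is linear in $\bm{x}$, the objective response is precisely the $\bm{\hat c}$-combination of the variable responses. The only point requiring care is the role of the uniqueness assumption: it guarantees that each regression has a single well-defined minimizer, so that the closed form applies and the identity is unambiguous. Should one wish to avoid invertibility, the same conclusion follows by observing that $\sum_{j=1}^p \hat c_j\,\bm{\beta}_{x_j}$ solves the objective normal equations $\bm{\Theta}^\intercal W\bm{\Theta}\,\bm{\beta}=\bm{\Theta}^\intercal W\,\bm{y}_f$, but the closed-form route is the most transparent.
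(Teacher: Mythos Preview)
Your proposal is correct and follows essentially the same approach as the paper: both write the weighted least-squares minimizers in the closed form $(\bm{\Theta}^\intercal W\bm{\Theta})^{-1}\bm{\Theta}^\intercal W\,\bm{y}$ under the invertibility assumption, observe that the objective target vector is the $\bm{\hat c}$-combination of the variable target vectors, and use linearity of this map to conclude the coherence identity. Your remark that one could alternatively verify the normal equations without invoking invertibility is a small addition beyond the paper's argument, but the main line of reasoning is the same.
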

\begin{proof}
Since the objective function $\bm{\hat c}^\intercal \bm{x}$ is fixed and linear we have
\[
f(\bm{x}^i;\bm{\theta}^i) = \sum_{j=1}^{p} \hat c_j \bm{x}^i_j.
\]
The weighted least-squares problem has the unique optimal solution
\[
\bm{\beta}_{x_j}^* = (\bm{\Theta}^\intercal \bm{W}\bm{\Theta})^{-1} \bm{\Theta}^\intercal \bm{W} \bm{y^j} \quad j=1,\ldots ,p
\]
and 
\[
\bm{\beta}_{f}^* = (\bm{\Theta}^\intercal \bm{W}\bm{\Theta})^{-1} \bm{\Theta}^\intercal \bm{W} \bm{y^f},
\]
where $\bm{\Theta}$ is the matrix whose $i$-th row is the vector $\bm{\theta}^i$, $\bm{W}$ is the matrix with weight $w^i$ on the diagonal and zeroes elsewhere, $\bm{y^j}$ is the vector where the $i$-th entry is the value $x_j^i$ and $\bm{y^f}$ is the vector where the $i$-th entry is the value $f(\bm{x}^i;\bm{\theta}^i)$. We assume here that $(\bm{\Theta}^\intercal \bm{W}\bm{\Theta})$ is invertible. Then for any new parameter vector $\bm{\theta}$ the predicted optimal value of our model is
\begin{align*}
\bm{\theta}^\intercal\bm{\beta}_{f} & = \bm{\theta}^\intercal (\bm{\Theta}^\intercal \bm{W}\bm{\Theta})^{-1} \bm{\Theta}^\intercal \bm{W} \bm{y^f}  \\
& = \bm{\theta}^\intercal (\bm{\Theta}^\intercal \bm{W} \bm{\Theta})^{-1} \bm{\Theta}^\intercal \bm{W} \left( \sum_{j=1}^{p} \hat c_j \bm{y^{j}}\right) \\
& = \sum_{j=1}^{p} \hat c_j \bm{\theta}^\intercal (\bm{\Theta}^\intercal \bm{W}\bm{\Theta})^{-1} \bm{\Theta}^\intercal \bm{W} \bm{y^{j}} \\
& = \sum_{j=1}^{p} \hat c_j \bm{\theta}^\intercal\bm{\beta}_{x_j},
\end{align*}
which means that the predictors are coherent regarding condition \eqref{eq:coherence1}. 
\end{proof}

\subsection{Additional Results Experiments}
\subsubsection{Shortest Path Problem}
For the SPP-$\theta$ considered in the experiments, we additionally present the prediction found by CLEMO for the decision vector compared to the values found by Dijkstra's Algorithm in \cref{fig:app_SPP}. In concordance with the results presented in the experiment section, we see CLEMO approximates the actual values relatively well.
\begin{figure}[H]
\vskip 0.2in
\begin{center}
\centerline{\includegraphics[width=0.98\columnwidth]{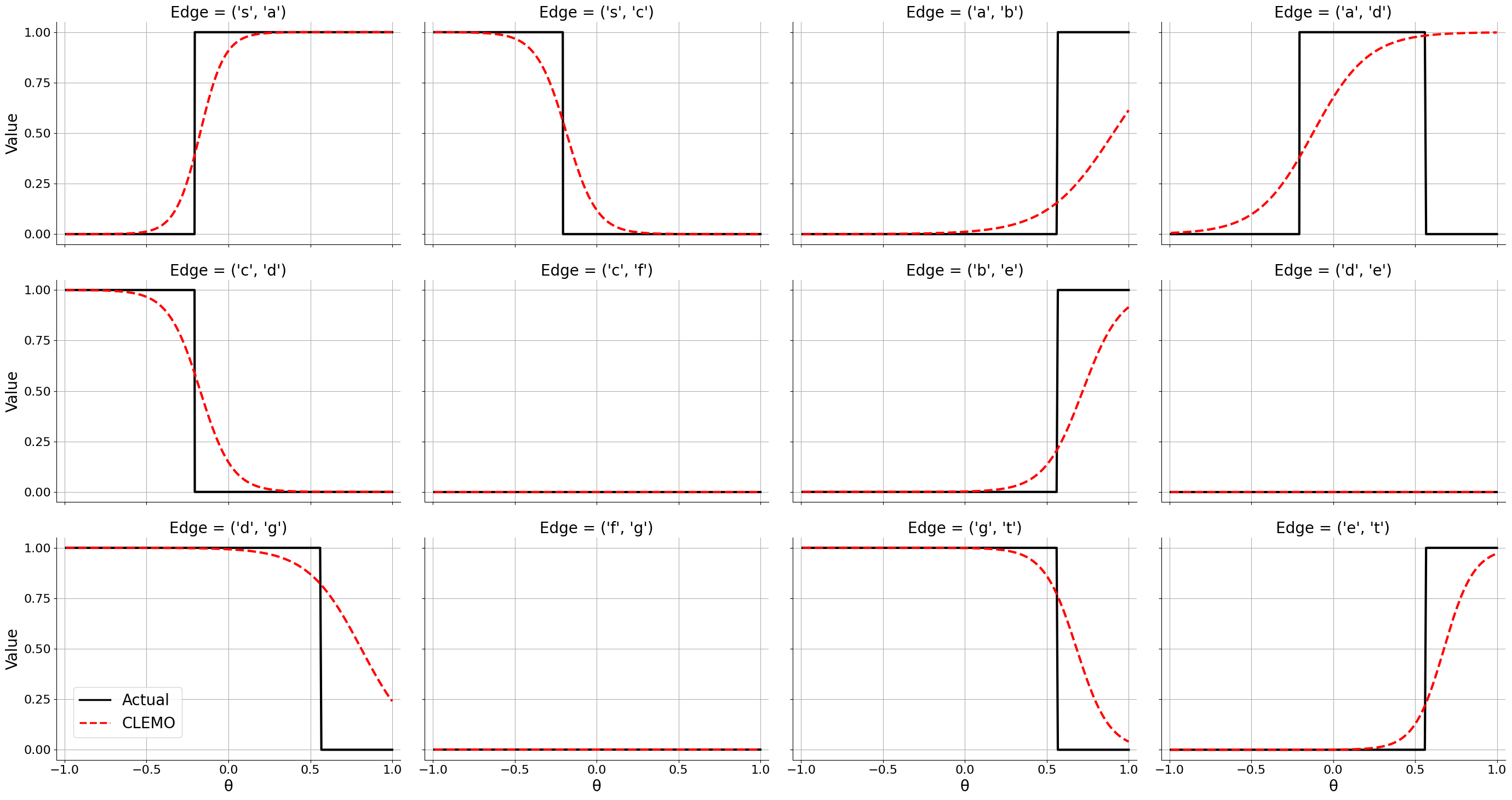}}
\caption{Decision variables solution of shortest path of SPP-$\theta$ instance as determined by Dijkstra's Algorithm and as predicted by CLEMO.}
\label{fig:app_SPP}
\end{center}
\vskip -0.2in
\end{figure}

\subsubsection{Knapsack Problem}\label{sec:KP_app}
For the knapsack problem, we applied our method on 10 instances of each of the 4 types of problems we considered. For each instance and for each method we used 10 different datasets to compare our CLEMO with benchmark methods linear regression (LR) and decision tree regressor (DTR). In \cref{fig:app_KS_type1,fig:app_KS_type2,fig:app_KS_type3,fig:app_KS_type4} we present scatter plots of the total accuracy loss and total incoherence (both conditions \eqref{eq:coherence1} and \eqref{eq:coherence2}) per instance and type of knapsack problem. Similar to the results presented in the experiment section, we find that CLEMO significantly reduces incoherence while the accuracy is compromised relatively less.

Next to the standard deviation of feature contribution, we consider an additional measure for stability, the feature stability index (FSI). This is an adaptation of the variables stability index (VSI) as presented in \cite{visani_statistical_2022}. The higher this measure, the more the non-zero features found by the different models due to resampling overlap. For a consistent explanation, the overlap should be large. As we apply CLEMO on 10 different datasets for each instance of each type of knapsack problem, we obtain 10 surrogate models given by $\bm{\beta}^1_{CL},\dots, \bm{\beta}^{10}_{CL}$. We denote $\mathcal{F}_{k,j}^i$ for the set of the top-$k$ most contributing, non-zero features of the $j$-th component of $\bm{\beta}^i_{CL}$. We define the $(k,j)$-\textsl{concordance} of two models $\bm{\beta}^{i_1}$ and $\bm{\beta}^{i_2}$ as the size of the intersection between $\mathcal{F}_{k,j}^{i_1}$ and $\mathcal{F}_{k,j}^{i_2}$ divided by the maximum potential overlap, \textit{i.e.},
\[ (k,j)\textsl{-concordance}(i_1,i_2)=\mathcal{F}_{k,j}^{i_1}\cap \mathcal{F}_{k,j}^{i_2}/ k.
\]
Let us consider the $k$-feature stability index ($k$-FSI), which is the average $(k,j)\text{-concordance}$ over all pairs $\bm{\beta}^1_{CL},\hdots \bm{\beta}^{10}_{CL}$ and all components $j$. Similar to VSI, $k$-FSI is bounded by 1 and the higher this measure $k$-FSI, the more the different models agree on the top-$k$ non-zero features of the different components and hence the more stable the method is. Lastly, we define the FSI as the sum over $k$-FSI for $k=1,\hdots 5$ resulting in a stability measure bounded by 5. When examining the FSI for CLEMO and the benchmark methods in \cref{tab:KS_loss}, we conclude that CLEMO has stability similar to the general linear regression approach.

\begin{figure}[h!]
\vskip 0.2in
\begin{center}
\centerline{\includegraphics[width=0.9\columnwidth]{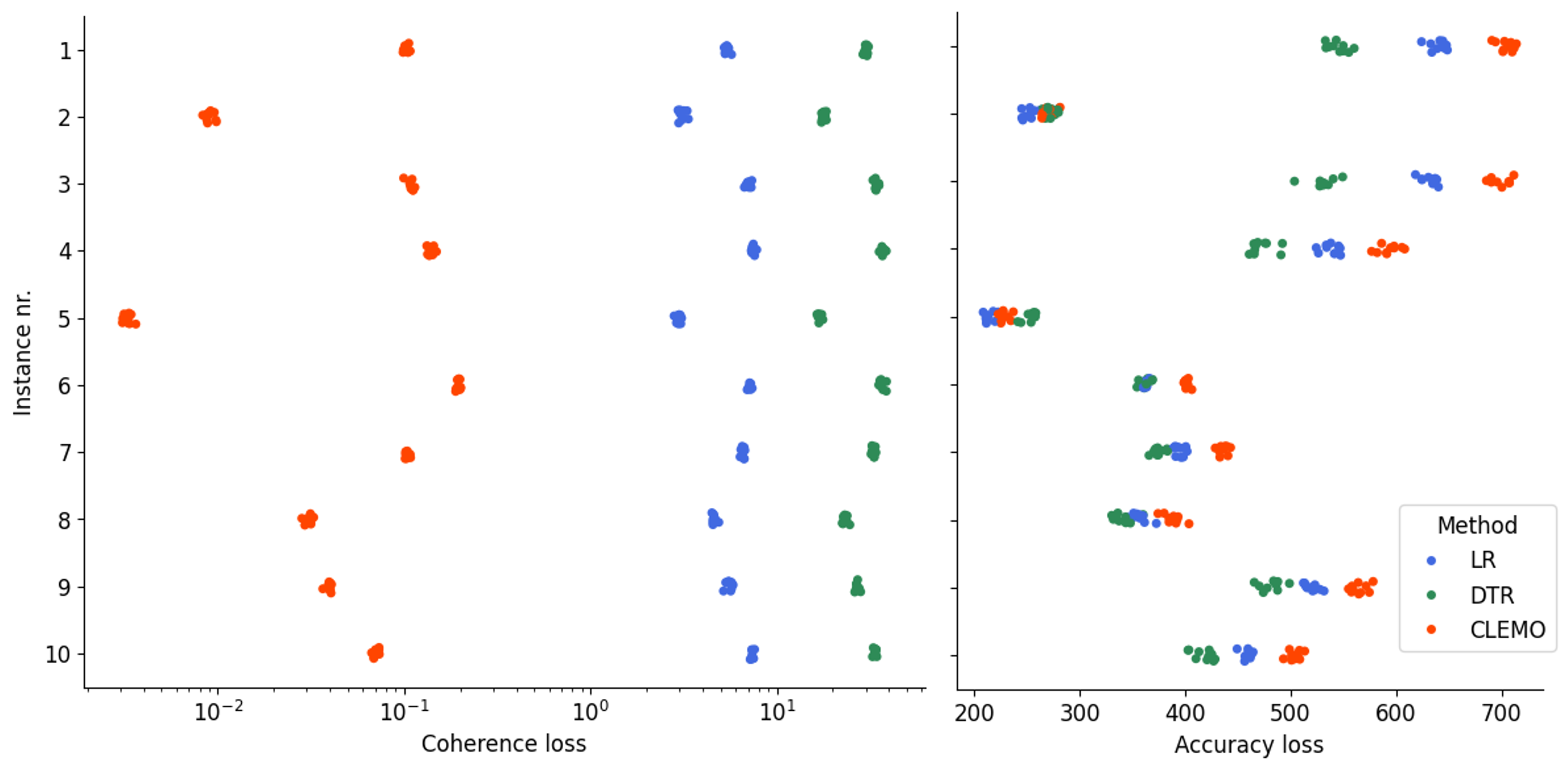}}
\caption{Scatter plot of the total incoherence (\textit{i.e.}, coherence loss) and total accuracy losses as found by the different methods on 10 distinct sample sets per instance of the knapsack problem of type 1.}
\label{fig:app_KS_type1}
\end{center}
\vskip -0.2in
\end{figure}

\begin{figure}[h!]
\vskip 0.2in
\begin{center}
\centerline{\includegraphics[width=0.9\columnwidth]{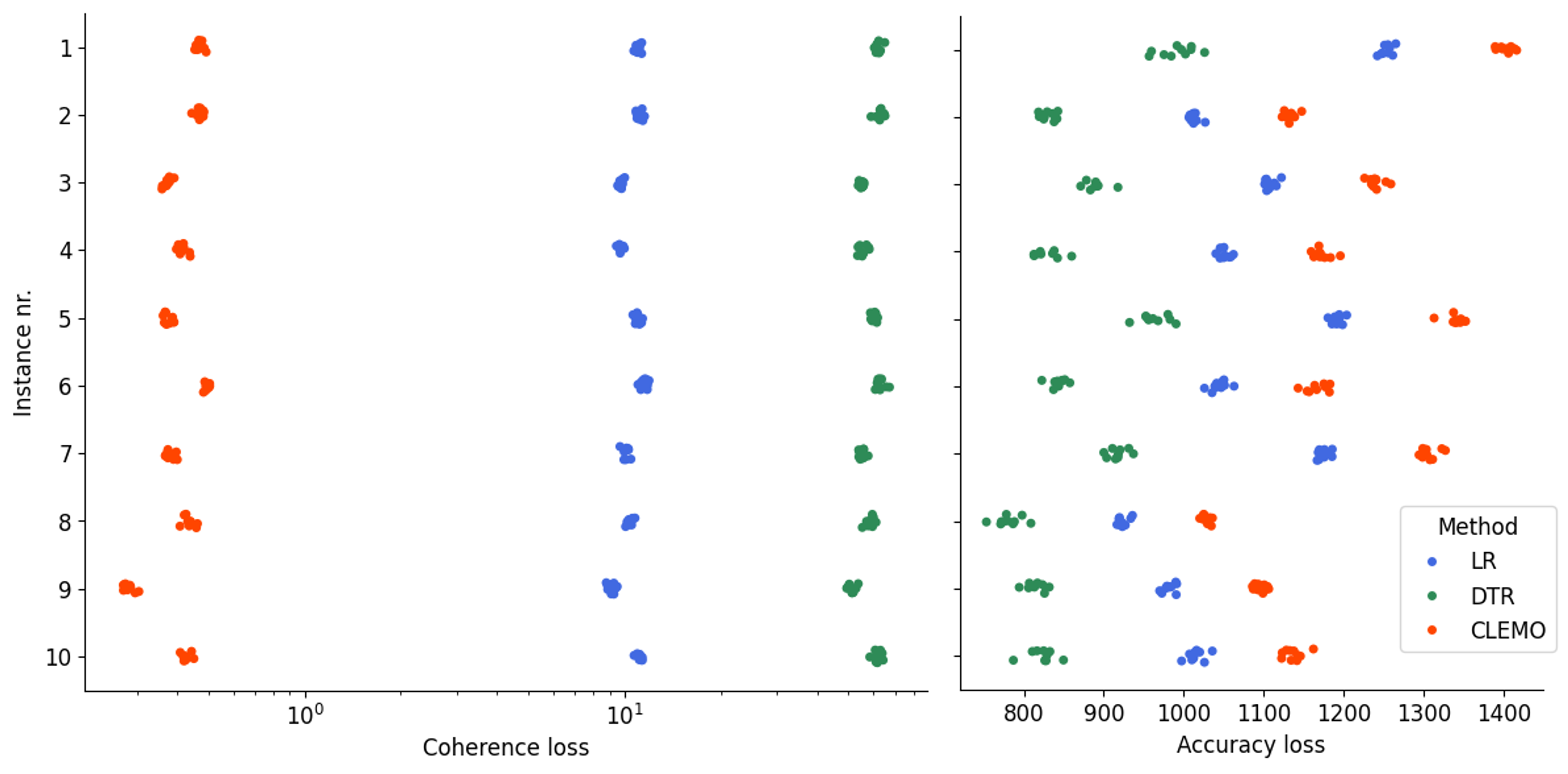}}
\caption{Scatter plot of the total incoherence (\textit{i.e.}, coherence loss) and total accuracy losses as found by the different methods on 10 distinct sample sets per instance of the knapsack problem of type 2.}
\label{fig:app_KS_type2}
\end{center}
\vskip -0.2in
\end{figure}

\begin{figure}[h!]
\vskip 0.2in
\begin{center}
\centerline{\includegraphics[width=0.9\columnwidth]{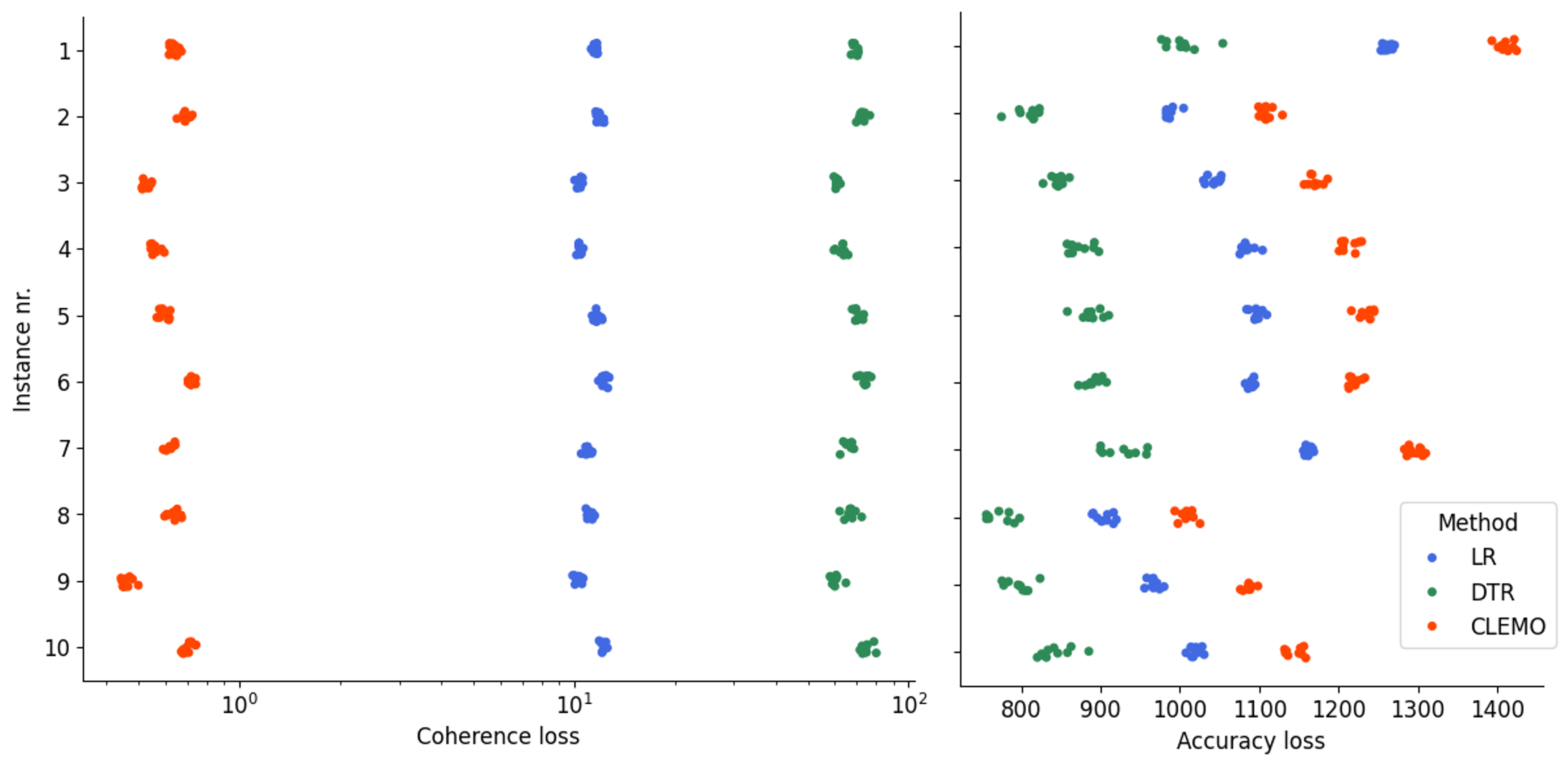}}
\caption{Scatter plot of the total incoherence (\textit{i.e.}, coherence loss) and total accuracy losses as found by the different methods on 10 distinct sample sets per instance of the knapsack problem of type 3.}
\label{fig:app_KS_type3}
\end{center}
\vskip -0.2in
\end{figure}

\begin{figure}[h!]
\vskip 0.2in
\begin{center}
\centerline{\includegraphics[width=0.9\columnwidth]{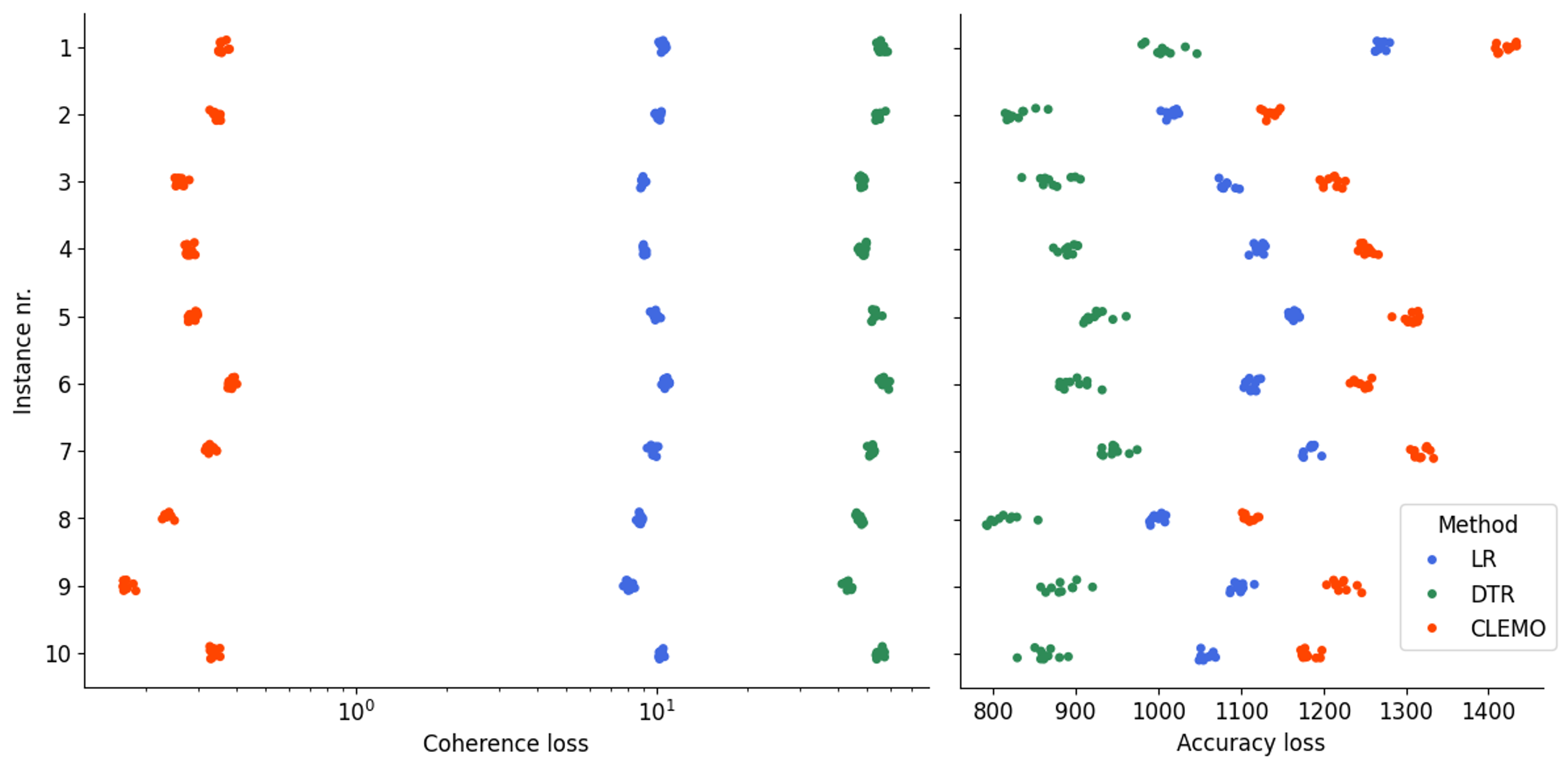}}
\caption{Scatter plot of the total incoherence (\textit{i.e.}, coherence loss) and total accuracy losses as found by the different methods on 10 distinct sample sets per instance of the knapsack problem of type 4.}
\label{fig:app_KS_type4}
\end{center}
\end{figure}
\clearpage

\subsubsection{Vehicle Routing Problem}
Similar to \cref{fig:VRP_n17_obj} as presented in \cref{sec:Experiments}, we display an additional explanation for the CVRP instance solved by Google OR-Tools. In \cref{fig:VRP_n17_x20}, we see the explanation found by CLEMO for the decision variable $x_{20}$ of the considered CVRP instance solved by Google OR-Tools. From this figure, a stakeholder can deduce that arc $(2,0)$ is less likely used by Google OR-Tools when $c_{02}$ increases, but more likely when $c_{08}$ increases.
\begin{figure*}[h!]
\vskip 0.2in
\begin{center}
  \includegraphics[width=0.9\textwidth]{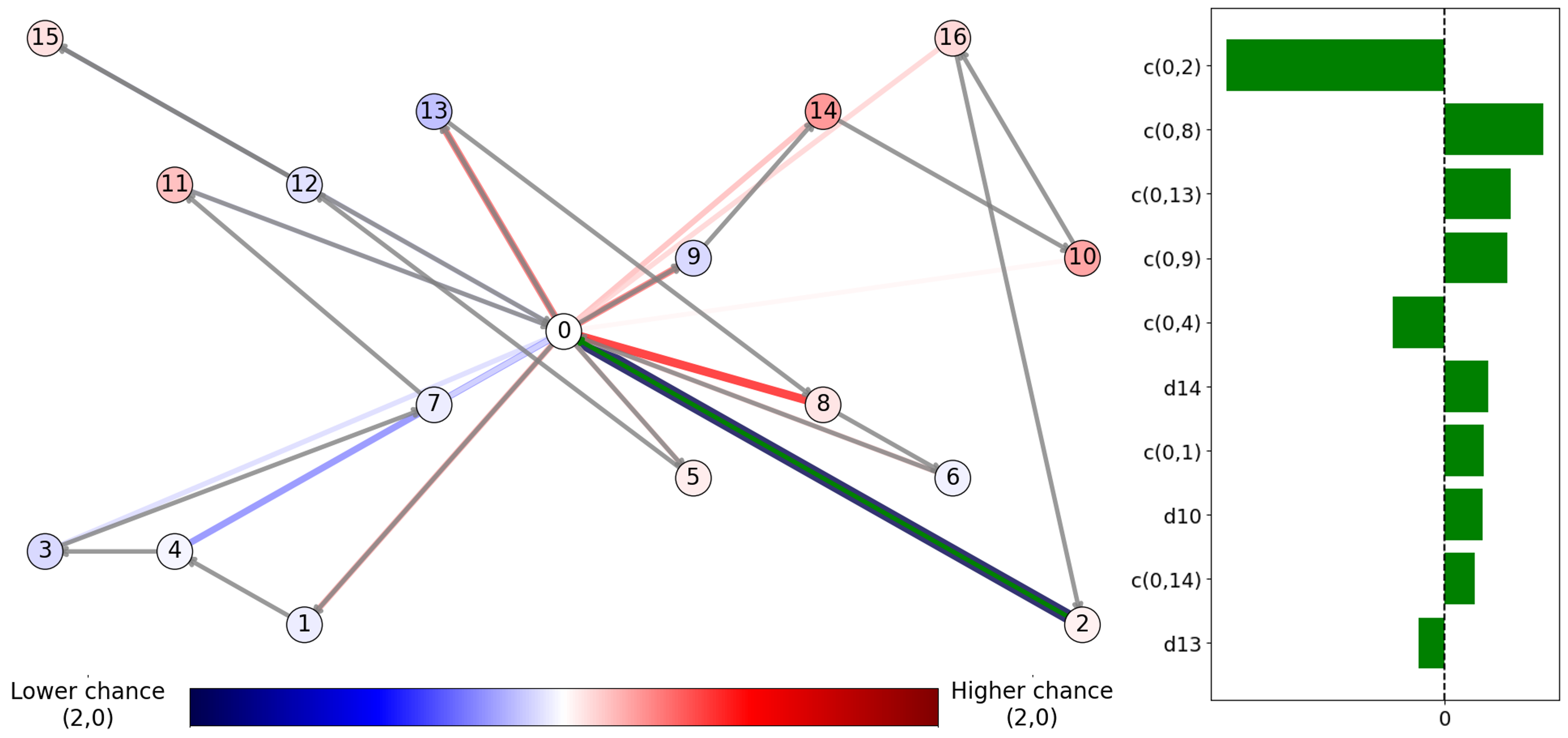}
  \caption{Explanation as found by CLEMO for the decision variable $x_{20}$ visualized in the present problem network structure. Also, the top 10 relative feature contributions is depicted on the right.}
  \label{fig:VRP_n17_x20}
  \end{center}
\vskip -0.2in
\end{figure*}


\end{document}